\documentclass[12pt,oneside]{amsart}

\usepackage{amssymb,latexsym}

\usepackage{hyperref}
\hypersetup{
  colorlinks   = true, 
  urlcolor     = blue, 
  linkcolor    = blue, 
  citecolor   = red 
}
\usepackage{anysize}
\marginsize{2cm}{2cm}{2cm}{2cm}

\usepackage[pdftex]{graphicx}
\usepackage{subcaption}
\usepackage{mathtools}

\newtheorem{theorem}{Theorem}[section]

\newtheorem{lemma}[theorem]{Lemma}

\theoremstyle{definition}

\theoremstyle{remark}

\numberwithin{equation}{section}

\DeclareMathOperator{\vol}{vol}

\DeclareMathOperator{\conv}{conv}

\DeclareMathOperator{\spn}{span}

\renewcommand{\epsilon}{\varepsilon}
\renewcommand{\phi}{\varphi}
\renewcommand{\kappa}{\varkappa}

\begin{document}

\title{Contractibility of Space of Symplectically Self-polar Convex Bodies}

\author{Mark Berezovik}

\address{Mark Berezovik, School of Mathematical Sciences, Tel Aviv University, Israel,  69978}

\email{m.berezovik@gmail.com}

\thanks{The author was partially supported by ISF grant 769/26 and ISF-NSFC grant 3231/23.}

\begin{abstract}
  Symplectically self-polar bodies have been introduced recently. They appeared in the context of Mahler's conjecture and in the context of symplectic outer billiard dynamics. In this paper we discuss the topology of the space of symplectically self-polar convex bodies. Namely, we show that this space is contractible. 
\end{abstract}

\subjclass[2020]{52A21, 53D99, 55P15}

\maketitle

\section{Introduction}\label{sec:introduction}

Symplectically self-polar convex bodies have been recently introduced in~\cite{berezovik2025symplectic}. Let $\omega$ be the standard symplectic form in $\mathbb{R}^{2n}$. We call a convex body $X \subset \mathbb{R}^{2n}$ with the origin in its interior \emph{symplectically self-polar} if $X = X^\omega$, where
\[
   X^\omega = \{y \in \mathbb{R}^{2n} \, | \, \forall x \in X\, \omega(x,y) \leq 1\}
\]
is a symplectic polar transformation of $X$. The latter notion recently also appeared in~\cite{albers2025outer,albers2026symplectic,berezovik2025symplectically,berezovik2026outer,deGosson2024}.

 Initial motivation for the study of this class of bodies was the connection to (symmetric) Mahler's conjecture~\cite{Mahler1939}, which asserts that
\[
  \vol K \cdot \vol K^\circ \geq \frac{4^n}{n!}
\]
for any centrally symmetric convex body $K \subset \mathbb{R}^n$ and its Euclidean polar $K^\circ$. It was shown in~\cite{berezovik2025symplectic} that Mahler's conjecture is equivalent to a conjecture about volumes of symplectically self-polar convex bodies. The latter conjecture states that
\[
  \vol X \geq \frac{2^n}{n!}
\]
for any symplectically self-polar convex body $X \subset \mathbb{R}^{2n}$.

Later, in~\cite{berezovik2026outer}, it was found that this class of bodies is interesting from the dynamical point of view. Namely,  symplectically self-polar convex bodies give non-trivial examples of convex bodies in $\mathbb{R}^{2n}$ which possess invariant hypersurfaces for the symplectic outer billiard map introduced in~\cite{tabachnikov1995dual}. Each such hypersurface consists of 4-periodic centrally symmetric orbits.

Symplectically self-polar convex bodies were also studied in~\cite{berezovik2025symplectically}. In particular, a sequence of symplectically self-polar polytopes $P_n \subset \mathbb{R}^{2n}$ was constructed such that each $P_n$ minimizes the Ekeland--Hofer--Zehnder capacity among symplectically self-polar convex bodies in $\mathbb{R}^{2n}$. Moreover, it seems that each $P_n$ minimizes volume among symplectically self-polar convex bodies in $\mathbb{R}^{2n}$.

However, the topology induced by the Hausdorff distance on this class of bodies has not been discussed before. In this paper, we show that it is trivial.

\begin{theorem}\label{thm:main}
  The space of symplectically self-polar bodies of a given dimension is contractible.
\end{theorem}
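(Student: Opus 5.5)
The plan is to exhibit the space $\mathcal{S}$ of symplectically self-polar bodies in $\mathbb{R}^{2n}$ as a retract of a contractible space, and to deduce its contractibility from that. Write $\omega(x,y)=\langle Jx,y\rangle$ with $J$ orthogonal and $J^2=-I$. Then
\[
X^\omega=\{y \,|\, \forall x\in X\ \langle Jx,y\rangle\le 1\}=(JX)^\circ=J X^\circ ,
\]
and consequently $X^{\omega\omega}=J(JX^\circ)^\circ=J^2X=-X$.

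First consequence: if $X=X^\omega$, then $X=X^{\omega\omega}=-X$, so every symplectically self-polar body is centrally symmetric. Let $\mathcal{K}_s$ denote the space of centrally symmetric convex bodies in $\mathbb{R}^{2n}$ with nonempty interior, with the Hausdorff metric. This space is contractible. A homotopy is $H_t(K)=(1-t)K+tB$, where $B$ is the Euclidean unit ball. It is a Minkowski combination, so it is Hausdorff-continuous jointly in $(t,K)$, it stays symmetric with nonempty interior, and it ends at the constant body $B$. It therefore suffices to construct a continuous retraction $r\colon\mathcal{K}_s\to\mathcal{S}$, i.e.\ $r|_{\mathcal{S}}=\mathrm{id}$. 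Then $r\circ H_t$ restricted to $\mathcal{S}$ contracts $\mathcal{S}$ to $r(B)$, and $r(B)=B$ because $B^\omega=JB^\circ=B$.

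For the retraction I will use the geometric mean $G(K,T)$ of symmetric convex bodies in the sense of Milman--Rotem. It can be defined as the limit of the iteration
\[
A_{k+1}=\tfrac12(A_k+B_k),\qquad B_{k+1}=\bigl(\tfrac12(A_k^\circ+B_k^\circ)\bigr)^\circ ,
\]
started from $A_0=K$, $B_0=T$. I will use four properties of $G$:
\begin{enumerate}
\item symmetry, $G(K,T)=G(T,K)$;
\item idempotence, $G(K,K)=K$;
\item linear equivariance, $G(LK,LT)=LG(K,T)$ for $L\in GL_{2n}$;
\item polarity, $G(K,T)^\circ=G(K^\circ,T^\circ)$.
\end{enumerate}
Set $r(K)=G(K,K^\omega)$. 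Idempotence shows that $r$ fixes every $K\in\mathcal{S}$. Using the identities above,
\[
r(K)^\omega=J\,G(K,JK^\circ)^\circ=J\,G(K^\circ,J^{-T}K)=G(JK^\circ,J^2K)=G(K^\omega,-K)=G(K^\omega,K)=r(K),
\]
where the last two steps use $-K=K$ and symmetry. Hence $r$ maps into $\mathcal{S}$. The map $K\mapsto K^\omega$ is Hausdorff-continuous on bodies containing the origin in their interior, so $r$ is continuous as soon as $G$ is.

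The main obstacle is the joint Hausdorff continuity of $G$, together with verifying the four properties. Properties (1)--(3) are visible from the iteration, since each step is symmetric in the pair, preserves the pair $(K,K)$, and is linearly equivariant. Property (4) holds because the iteration for $(K^\circ,T^\circ)$ is the iteration for $(K,T)$ with its two sequences swapped and polarized, and the two sequences share a limit. For convergence and continuity, I would first establish the nesting $B_{k+1}\subset B_k$, $A_{k+1}\subset A_k$, $B_k\subset A_k$. These follow from the arithmetic--harmonic inequality for the support functions, using $(A+B)/2\supset\bigl((A^\circ+B^\circ)/2\bigr)^\circ$. Next I would show a contraction estimate: if $\lambda_k\ge1$ is minimal with $A_k\subset\lambda_kB_k$, then $\lambda_{k+1}\le\sqrt{\lambda_k}$, so that $\lambda_k\to1$ at a rate depending only on $\lambda_0$. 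On a neighbourhood of a given pair, the bodies are sandwiched between fixed balls and $\lambda_0$ is bounded. The convergence is then uniform there, and $G$ is a locally uniform limit of continuous maps, hence continuous. If the square-root estimate proves awkward, a fallback is any explicit continuous mean with properties (1)--(4). One candidate is $\bigl(\tfrac12(K+T)\bigr)\cap\bigl(\tfrac12(K^\circ+T^\circ)\bigr)^\circ$ averaged with its polar, though this requires rechecking that it fixes self-polar pairs.
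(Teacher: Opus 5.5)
Your route is genuinely different from the paper's, and it works once one false estimate is replaced. The paper stays inside the class of self-polar bodies throughout. It forms the symplectic $\ell_2$-sum $Z=X\oplus_2 Y\subset\mathbb{R}^{4n}$, which is self-polar, and reduces it along the rotating isotropic subspaces $L_\varphi=\spn\{v_i\cos\varphi-f_i\sin\varphi\}$. By Lemma~\ref{lem:reduction} every reduction $M_\varphi$ is self-polar, with $M_0=X$ and $M_{\pi/2}=Y$, and fixing $Y$ gives the contraction.

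You instead exhibit $\mathcal{S}$ as a retract of the contractible space $\mathcal{K}_s$ via $r(K)=G(K,K^\omega)$, and your algebra is right. You have $X^\omega=JX^\circ$ and $X^{\omega\omega}=-X$, so $\mathcal{S}\subset\mathcal{K}_s$. Properties (1)--(4) follow from the iteration as you say. The identity $r(K)^\omega=G(K^\omega,-K)=r(K)$ uses central symmetry exactly where it is needed.

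The paper's argument is intrinsically symplectic and uses only Hahn--Banach and the duality of $\ell_2$-sums, with no limiting process. It also gives an explicit interpolation between any two self-polar bodies, the one computed in Section~\ref{sec:examples}. Yours imports the Milman--Rotem mean and must prove its continuity. In exchange it yields a stronger structural fact: $\mathcal{S}$ is a retract of the space of all symmetric bodies. Since $(SK)^\omega=SK^\omega$ for $S\in\mathrm{Sp}(2n)$, this retraction is even $\mathrm{Sp}(2n)$-equivariant.

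The step that fails is the contraction estimate: $\lambda_{k+1}\le\sqrt{\lambda_k}$ is false. For $K=\lambda B$ and $T=B$ you get $A_1=\frac{1+\lambda}{2}B$ and $B_1=\frac{2\lambda}{1+\lambda}B$, so $\lambda_1=\frac{(1+\lambda)^2}{4\lambda}\approx\lambda/4$. This exceeds $\sqrt{\lambda}$ for large $\lambda$; for example $\lambda=16$ gives $\lambda_1=289/64>4$.

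Your nesting is also reversed for the second sequence. For $k\ge1$ (not $k=0$) one has $B_k\subset A_k$, hence $A_{k+1}\subset A_k$. But $\|\cdot\|_{B_{k+1}}=\frac12(\|\cdot\|_{A_k}+\|\cdot\|_{B_k})\le\|\cdot\|_{B_k}$ gives $B_k\subset B_{k+1}$, not the other way round.

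With the correct nesting the repair is one line:
$A_{k+1}=\frac12(A_k+B_k)\subset\frac{1+\lambda_k}{2}B_k\subset\frac{1+\lambda_k}{2}B_{k+1}$, so $\lambda_{k+1}-1\le\frac12(\lambda_k-1)$. If $rB\subset K,T\subset RB$, then $rB\subset B_1$ and $A_1\subset RB$, so $\lambda_1\le R/r$. Since $B_k\subset G\subset A_k$, this gives $d_H(A_k,G)\le d_H(A_k,B_k)\le 2^{1-k}(R/r-1)R$, uniformly on a neighbourhood of $(K,T)$. That is all your locally-uniform-limit argument needs. Alternatively, continuity of $G$ follows from the sandwich $B_k(K,T)\subset G(K,T)\subset A_k(K,T)$ together with pointwise convergence.

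The fallback mean you mention is then unnecessary. As written, its intersection is just the harmonic mean, since $\bigl(\tfrac12(K^\circ+T^\circ)\bigr)^\circ\subset\tfrac12(K+T)$.
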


\subsection*{Acknowledgments} The author thanks Roman Karasev for useful comments and remarks.

\section{Preliminaries and proof of theorem}\label{sec:prelim}

Before we begin the proof of the theorem, we need to make several preliminary definitions and observations. The symplectic $\ell_2$-sum of convex bodies $X,Y \subset \mathbb{R}^{2n}$, containing the origin in their interior, is defined by
\[
  X \oplus_2 Y = \{(x,y) \in \mathbb{R}^{2n}\times \mathbb{R}^{2n}\, |\, \|x\|_X^2 + \|y\|_Y^2 \leq 1\}.
\]
Here $\|\cdot\|_X, \|\cdot\|_Y$ are norms (not necessarily symmetric) whose unit balls are $X$ and $Y$ respectively. We call this $\ell_2$-sum symplectic in the sense that we consider the space $\mathbb{R}^{2n} \times \mathbb{R}^{2n}$ as a direct sum of two symplectic spaces. Note that in this case $(X \oplus_2 Y)^\omega = X^\omega \oplus_2 Y^\omega$, the proof of this fact is similar to the case of the Euclidean $\ell_2$-sum and the standard polar transformation.

We also need the notion of linear symplectic reduction. Let $X \subset \mathbb{R}^{2n}$ be a convex body with the origin in its interior and $L \subset \mathbb{R}^{2n}$ be an isotropic linear subspace. Consider the corresponding coisotropic $\omega$-orthogonal subspace:
\[
  L^{\perp_{\omega}} = \{y \in \mathbb{R}^{2n}\ |\ \forall x \in L\ \omega (x,y) = 0\}.
\]
Then the symplectic reduction of $X$ along $L$ is the convex body $Y = (X \cap L^{\perp_{\omega}})/L$ in the symplectic space $L^{\perp_{\omega}}/L$. For simplicity, we use the same notation $\omega$ for the symplectic form on $L^{\perp_{\omega}}/L$.

\begin{lemma}[Lemma 4.1 in \cite{berezovik2025symplectic}] \label{lem:reduction}
  Symplectic polar transformation and linear symplectic reduction commute. In particular, linear symplectic reduction of symplectically self-polar convex body is symplectically self-polar.
\end{lemma}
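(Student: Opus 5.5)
We want to prove Lemma~\ref{lem:reduction}: for a convex body $X\subset\mathbb{R}^{2n}$ with the origin in its interior and an isotropic subspace $L$, writing $W=L^{\perp_\omega}$ and $\pi\colon W\to W/L$ for the projection, we claim
\[
  \bigl((X\cap W)/L\bigr)^\omega=(X^\omega\cap W)/L,
\]
where the left-hand polar is taken in $(W/L,\omega)$. The second assertion then follows at once: if $X=X^\omega$, the two sides are the reductions of $X^\omega$ and of $X$, so the reduced body is self-polar. The plan is to reduce to the classical duality ``polar of a section $=$ projection of the polar'' for Euclidean polarity, transported through the isomorphism $J$ given by $\omega(x,y)=\langle Jx,y\rangle$.

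\textbf{Step 1: rewrite $X^\omega$ as a Euclidean polar.} Since $\omega(x,y)=\langle Jx,y\rangle$, we have $X^\omega=(JX)^\circ$, or equivalently $X^\omega=J^{-1}\bigl(X^\circ\bigr)$ up to sign conventions. Because $J$ is orthogonal, this lets us move between the two polarities freely.

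\textbf{Step 2: the inclusion $(X^\omega\cap W)/L\subset\bigl((X\cap W)/L\bigr)^\omega$.} Take $y\in X^\omega\cap W$ and $x\in X\cap W$. The reduced form is $\omega([x],[y])=\omega(x,y)\le 1$, and it is well defined because $W$ is $\omega$-orthogonal to $L$. This inclusion is therefore immediate.

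\textbf{Step 3: the reverse inclusion.} This is the main step. Let $[y]$, with $y\in W$, satisfy $\omega(x,y)\le 1$ for all $x\in X\cap W$. We must find $\ell\in L$ with $y+\ell\in X^\omega$.
\begin{itemize}
  \item Translating by $\ell$ changes $\omega(x,y)$ by $\omega(x,\ell)$. As $\ell$ ranges over $L$, the functional $\omega(\cdot,\ell)$ ranges over all linear functionals vanishing on $W$, because $(L^{\perp_\omega})^{\perp_\omega}=L$ gives $\{\omega(\cdot,\ell)\colon \ell\in L\}=W^{\mathrm{ann}}$.
  \item So the question becomes: given a linear functional $f=\omega(\cdot,y)$ with $f\le 1$ on $X\cap W$, does $f$ extend, after adding a functional vanishing on $W$, to a functional that is $\le 1$ on all of $X$?
  \item This is exactly the Hahn--Banach extension for the gauge $\|\cdot\|_X$. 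Since $0\in\operatorname{int}X$, the gauge is finite and sublinear. The hypothesis says $f\le\|\cdot\|_X$ on $W$, so $f$ extends to $F\le\|\cdot\|_X$ on $\mathbb{R}^{2n}$.
  \item Any linear $F$ agreeing with $f$ on $W$ differs from $f$ by an element of $W^{\mathrm{ann}}$, hence equals $\omega(\cdot,y+\ell)$ for some $\ell\in L$. Then $F\le 1$ on $X$ means $y+\ell\in X^\omega$, and $[y+\ell]=[y]$.
\end{itemize}

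\textbf{Step 4: compactness and interior point.} We check that $(X\cap W)/L$ is a convex body in $W/L$ with the origin in its interior; this is clear because $0\in\operatorname{int}X$. We also check that the reduction is compact. For this we use that $X\cap W$ is compact, so its image under the linear map $\pi$ is compact. The same argument applies to $X^\omega$, which is again a convex body.

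The only delicate point is Step 3: the bookkeeping identifying translates by $L$ with the annihilator of $W$ via $\omega$, which relies on $\omega$ being nondegenerate. Once that identification is in place, Hahn--Banach does all the work.
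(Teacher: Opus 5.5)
Your proof is correct and is essentially the paper's argument: the easy inclusion follows from the definition of the reduced form, and for the reverse inclusion you extend the functional $\omega(\cdot,y)|_{L^{\perp_\omega}}$, which is dominated by the gauge $\|\cdot\|_X$, via Hahn--Banach. You then use nondegeneracy of $\omega$ (in your bookkeeping, $(L^{\perp_\omega})^{\perp_\omega}=L$) to write the extension as $\omega(\cdot,y+\ell)$ with $\ell\in L$, which is exactly what the paper does. Step 1 is never used and can be dropped; its formula is also off for non-symmetric $X$, since $(JX)^\circ = JX^\circ$, not $J^{-1}X^\circ=-JX^\circ$.
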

We provide a proof of this statement here for completeness.

\begin{proof}
   Let $X \subset \mathbb{R}^{2n}$ be a convex body with the origin in its interior. Consider its reduction $Y = (X \cap L^{\perp_{\omega}})/L$ along an isotropic subspace $L$. From the definition of symplectic polarity it is easy to see that $Y^\omega \supseteq (X^\omega \cap L^{\perp_{\omega}})/L$.
   
   It remains to prove the reverse inclusion. Let $y^* \in Y^\omega$. Consider the functional $\lambda (\cdot) = \omega(\pi(\cdot),y^*)$ defined on $L^{\perp_{\omega}}$, where $\pi$ is the projection from $L^{\perp_{\omega}}$ to $L^{\perp_{\omega}}/L$. Note that $\lambda|_L = 0$ and $\lambda(x) \leq \|x\|_X$ for every $x \in L^{\perp_{\omega}}$. By the Hann--Banach theorem we can extend $\lambda$ such that $\lambda (x) \leq \|x\|_X$ for every $x \in \mathbb{R}^{2n}$. Since $\omega$ is non-degenerate, there exists unique vector $x^* \in \mathbb{R}^{2n}$ such that $\lambda(\cdot) = \omega(\cdot,x^*)$. This vector belongs to $L^{\perp_{\omega}}$ since $\lambda|_L = 0$ and to $X^\omega$ since $\lambda(\cdot) \leq \|\cdot\|_X$. Moreover, by the definition of $\lambda$ we have $\omega(\cdot,y^*) = \omega(\cdot, \pi(x^*))$ on $L^{\perp_{\omega}}/L$, hence $\pi(x^*) = y^*$. Thus, $Y^\omega \subseteq (X^\omega \cap L^{\perp_{\omega}})/L$.
\end{proof}

\begin{proof}[Proof of Theorem~\ref{thm:main}]
  Let $X, Y \subset \mathbb{R}^{2n}$ be symplectically self-polar convex bodies. Consider their symplectic $\ell_2$-sum $Z = X \oplus_2 Y \subset \mathbb{R}^{2n} \times \mathbb{R}^{2n}$. Note that $Z$ is also symplectically self-polar. Indeed, $Z^\omega = (X\oplus_2 Y)^\omega = X^\omega \oplus_2 Y^\omega = X \oplus_2 Y = Z$.

  Let $\{e_i, f_i\}_{i=1}^n$ be the standard symplectic basis in the first factor of $\mathbb{R}^{2n} \times \mathbb{R}^{2n}$ and $
  \{u_i, v_i\}_{i=1}^n$ be the standard symplectic basis in the second factor.

  Consider the following family of subspaces
  \[
      L_{\varphi} = \spn\{v_i\cdot \cos \varphi - f_i \cdot \sin \varphi\}_{i=1}^n \subset \mathbb{R}^{2n} \times \mathbb{R}^{2n},
  \]
  for $\varphi \in [0,\pi/2]$. Note that every subspace $L_\varphi$ is isotropic and the corresponding symplectic orthogonal subspace can be described as
  \[
      L_{\varphi}^{\perp_\omega} = L_\varphi \oplus \spn\{e_i \cdot \cos \varphi + u_i \cdot \sin \varphi\}_{i=1}^n \oplus \spn\{f_i \cdot \cos \varphi + v_i \cdot \sin \varphi\}_{i=1}^n.
  \]
  Note that images of vectors $\{e_i \cdot \cos \varphi + u_i \cdot \sin \varphi, f_i \cdot \cos \varphi + v_i \cdot \sin \varphi\}_{i=1}^n$ form symplectic basis in $L_{\varphi}^{\perp_\omega}/L_\varphi$. Using this choice of basis in each $L_{\varphi}^{\perp_\omega}/L_\varphi$ we identify this space with $\mathbb{R}^{2n}$ with the standard symplectic form. Let us denote this identification by $H_\varphi \colon L_{\varphi}^{\perp_\omega}/L_\varphi \to \mathbb{R}^{2n}$.

  Consider the family $M_\varphi = H_\varphi((Z \cap L_{\varphi}^{\perp_\omega}) / L_\varphi) \subset \mathbb{R}^{2n}$. From Lemma~\ref{lem:reduction} it follows that each $M_{\varphi}$ is symplectically self-polar. 

  Note that $M_0 = X$. Indeed, $L_0 = \spn\{v_i\}_{i=1}^n$ and $L_{0}^{\perp_\omega} = L_0 \oplus \spn\{e_i,f_i\}_{i=1}^n$. Hence, $x \in M_0$ if and only if there exists $y \in L_0$ such that $\|x\|_X^2 + \|y\|_Y^2 \leq 1$. Therefore, $x \in M_0$ if and only if $\|x\|_X^2 \leq 1$. Thus, $M_0 = X$. In the similar way one can show that $M_{\pi/2} = Y$. 

  Note also that $M_{\varphi}$ depends continuously from $\varphi$ and the choice of $X$ and $Y$. Thus, if we fix $Y$, choose $X$ arbitrary, and change the parameter $\varphi$ from $0$ to $\pi/2$, we get homotopy of the space which shrinks it to one point.
\end{proof}

\section{Examples}\label{sec:examples}

In this section we show how transformation from one symplectically self-polar body to another, described in the proof of Theorem~\ref{thm:main}, looks like in dimension two for some pairs $X$ and $Y$. 

\begin{center}
  \textbf{Hexagon - Hexagon} 
\end{center}
Consider two different symplectically self-polar hexagons $X$ and $Y$ in $\mathbb{R}^2$ such that
\begin{align*}
  \|(q,p)\|_X = \max\{|q|, |p|, |q-p|\},\\
  \|(q,p)\|_Y = \max\{|q|, |p|, |q+p|\}.
\end{align*}
Note that symplectically self-polar hexagons play special role among all planar symplectically self-polar convex bodies. Namely, symplectically self-polar hexagon is unique planar symplectically self-polar convex body of minimal volume up to linear symplectomorphism~\cite[Theorem 4.1]{berezovik2025symplectically}. 

Then following the proof of Theorem~\ref{thm:main} the norm of $Z = X \oplus_2 Y$ by the definition has the form
\[
  \|(q_1,p_1,q_2,p_2)\|_Z^2 = \|(q_1,p_1)\|_X^2 + \|(q_2,p_2)\|_Y^2.
\]

Let us consider the following function
\begin{align*}
  g_{\varphi}(q,p,t) &= \|q\cdot (e \cdot \cos \varphi + u \cdot \sin \varphi) + p\cdot (f \cdot \cos \varphi + v \cdot \sin \varphi) + t \cdot (v \cdot \cos \varphi - f \cdot \sin \varphi) \|_Z^2 =\\
  &= \|(q\cdot \cos \varphi,\, p \cdot \cos \varphi - t \cdot \sin \varphi,\,  q \cdot \sin \varphi, t \cdot \cos \varphi + p \cdot \sin \varphi)\|_Z^2
\end{align*}

Then, by the definition of $M_\varphi$, one has
\[
  M_\varphi = \{(q,p) \in \mathbb{R}^2\, |\, \exists t \in \mathbb{R}: g_{\varphi}(q,p,t) \leq 1\}.
\]
Consider the following family of symplectically self-polar hexagons:
\[
  P_\varphi = \conv \{\pm(0,1), \pm(-1,1-\cos^2\varphi), \pm(-1,-1+\sin^2\varphi)\}.
\]
We are going to show that $M_\varphi = P_\varphi$. Let us first show that points $(0,1), (-1,1-\cos^2\varphi)$, and $(-1,-1+\sin^2\varphi)$ belong to $M_\varphi$. Indeed, $g_{\varphi} (0,1,0) = g_{\varphi} (-1,1-\cos^2 \varphi,\sin(2\varphi)/2) = g_{\varphi} (-1,-1+\sin^2 \varphi,\sin(2\varphi)/2) = 1$. Together with the fact that $P_\varphi$ and $M_\varphi$ are centrally symmetric it follows that $P_\varphi \subseteq M_\varphi$. Note that symplectic polar transformation changes the direction of inclusion, therefore $ M_\varphi^\omega \subseteq P_\varphi^\omega$. However, $P_\varphi = P_\varphi^\omega$ and $M_\varphi = M_\varphi^\omega$. Thus, $M_\varphi = P_\varphi$.

\begin{center}
  \textbf{Ball - Hexagon} 
\end{center}
This time let us consider the Euclidean unit ball and the hexagon such that 
\begin{align*}
  \|(q,p)\|_X &= \sqrt{q^2 + p^2},\\
  \|(q,p)\|_Y &= \max\{|q|,|p|, |q-p|\}.
\end{align*}
In contrast to hexagon, the Euclidean unit ball is unique planar symplectically self-polar convex body of maximal volume up to linear symplectomorphism. This follows directly from the equality case in the Blaschke--Santaló inequality~\cite{Santalo1949}.

Similar to the previous case consider the function
\[
  g_{\varphi}(q,p,t) = \|(q\cdot \cos \varphi,\, p \cdot \cos \varphi - t \cdot \sin \varphi)\|_X^2 + \|(q \cdot \sin \varphi, t \cdot \cos \varphi + p \cdot \sin \varphi)\|_Y^2.
\]
Then
\[
  M_\varphi = \{(q,p) \in \mathbb{R}^2| \exists t \in \mathbb{R} : g_{\varphi}(q,p,t) \leq 1\}.
\]
Consider family $K_\varphi$ of symplectically self-polar convex bodies of the form, see Figure~\ref{fig:bodies_K_phi},

\begin{figure}[!ht]
    \centering 
    \includegraphics[width=0.6\textwidth]{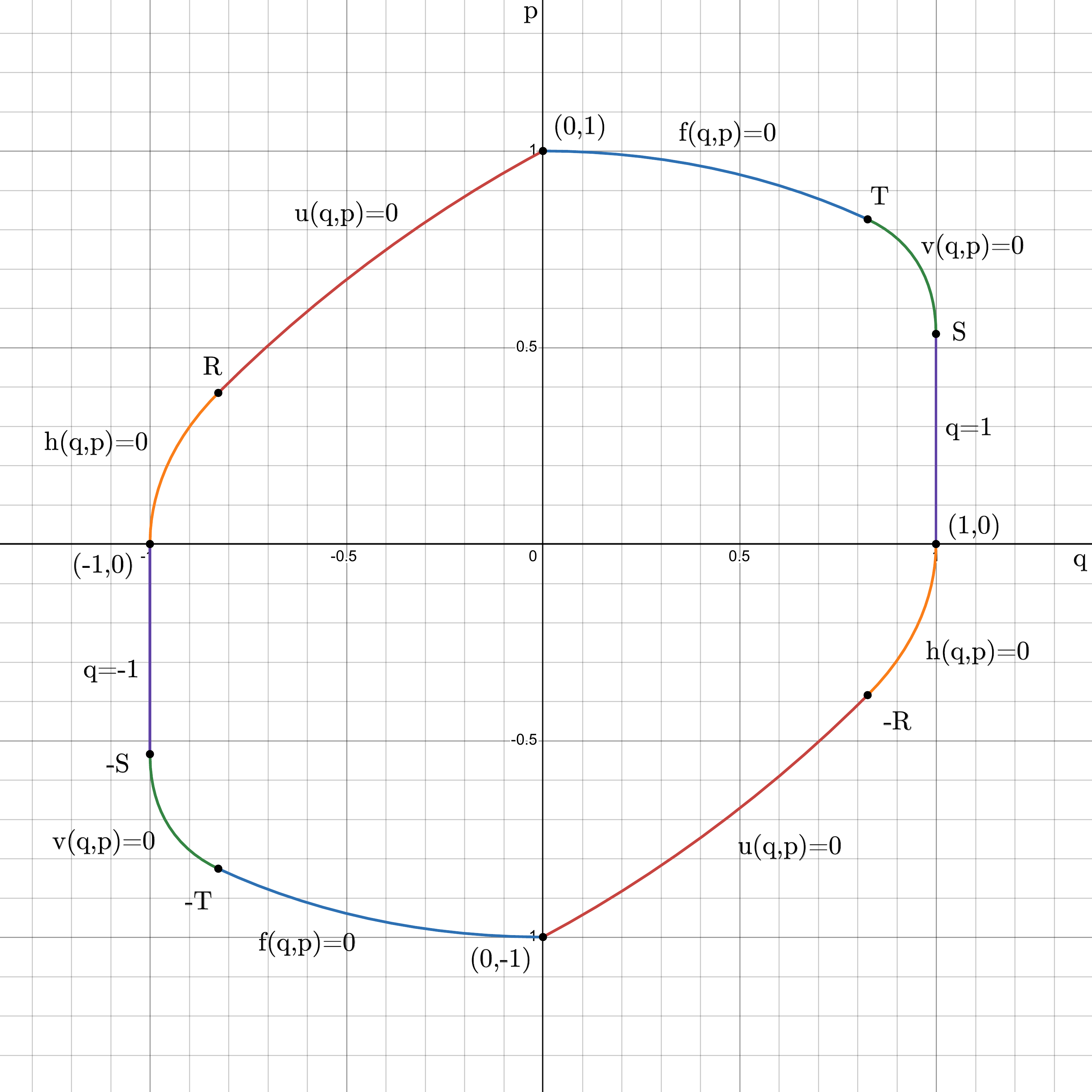}
    \caption{Boundary of the body $K_\varphi$.}
    \label{fig:bodies_K_phi}
  \end{figure}

where
\begin{align*}
   R &= \left(- \frac{1}{\sqrt{1+ \cos^2 \varphi}}, \frac{\cos^2\varphi}{\sqrt{1+ \cos^2\varphi}}\right),\\
  T &= \left(\frac{1}{\sqrt{1 + \cos^2 \varphi}},\frac{1}{\sqrt{1 + \cos^2 \varphi}}\right),\\
  S &= (1,\sin^2 \varphi).
\end{align*}
and
\begin{align*}
  h(q,p) &= \cos^2 \varphi \cdot q^2 + p^2 - \cos^2 \varphi,\\
  u(q,p) &= (7 + \cos 4\varphi)\cdot q^2 + 8\cdot (\cos2\varphi - 1) \cdot qp + 8p^2 -8,\\
    f(q,p) &= \cos^2 \varphi \cdot q^2 + p^2 - 1,\\
  v(q,p) &= (7+ \cos 4 \varphi) \cdot q^2 + 8(\cos 2\varphi - 1) \cdot qp + 8p^2 - 4 (1+ \cos 2\varphi).
\end{align*}
One can prove that $K_{\varphi}$ is symplectically self-polar for every $\phi$. Similar to the previous case let us prove that $K_{\varphi} \subseteq M_\varphi$, then it will follow that $K_{\varphi} = M_{\varphi}$. It is sufficient to show that the boundary of $K_{\varphi}$ for $p \geq 0$ is contained in $M_{\varphi}$. For the upper boundary the following holds:
\begin{align*}
  p_h(q) &= \cos \varphi \cdot \sqrt{1-q^2}, &-1 \leq q \leq -\frac{1}{\sqrt{1+ \cos^2 \varphi}},\\
  p_u(q) &= q\cdot \sin^2 \varphi + \sqrt{1-q^2 \cos^2 \varphi}, &-\frac{1}{\sqrt{1+\cos^2\varphi}} \leq q \leq 0,\\
  p_f(q) &= \sqrt{1- q^2 \cdot \cos^2\varphi}, &0 \leq q \leq \frac{1}{\sqrt{1 + \cos^2 \varphi}},\\
  p_v(q) &= q \cdot \sin^2 \varphi + \cos \varphi \sqrt{1-q^2}, &\frac{1}{\sqrt{1+ \cos^2 \varphi}} \leq q \leq 1.
\end{align*}
Consider the following collection of functions:
\begin{align*}
  t_h(q) &= -\sin \varphi \cdot \sqrt{1-q^2},\\
  t_u(q) &= \cos \varphi \cdot \sin \varphi \cdot q,\\
  t_v(q) &= \cos \varphi \cdot \sin \varphi \cdot q - \sin \varphi \cdot \sqrt{1-q^2},\\
  t_{\text{vert}}(p) &= \frac{p}{\tan \varphi}.
\end{align*}
Then
\begin{align*}
  g_{\varphi} (q,p_h(q),t_h(q)) &= 1, &-1 \leq q \leq -\frac{1}{\sqrt{1+\cos^2 \varphi}},\\
  g_{\varphi} (q,p_u(q),t_u(q)) &= 1, &-\frac{1}{\sqrt{1+\cos^2 \varphi}} \leq q \leq 0,\\
  g_{\varphi} (q, p_f(q),0) &= 1, &0 \leq q \leq \frac{1}{\sqrt{1+ \cos^2 \varphi}},\\
  g_{\varphi} (q, p_v(q),t_v(q)) &= 1, &\frac{1}{\sqrt{1+ \cos^2 \varphi}} \leq q \leq 1,\\
  g_{\varphi} (1,p,t_{\text{vert}}(p)) &= 1, &0\leq p \leq \sin^2 \varphi.
\end{align*}
This finishes the proof.
\bibliography{bibliography}
\bibliographystyle{abbrv}
\end{document}